\newtheorem{theorem}{Theorem}
\newtheorem{proposition}{Proposition}
\theoremstyle{definition}
\newtheorem{definition}{Definition}
\newtheorem{remark}{Remark}
\newcommand{\R}{\mathbb{R}}
\begin{document}

\title{Bernstein Polynomials on simplex}
 \author{A. Bayad}
\address{Abdelmejid Bayad. D\'epartement de math\'ematiques \\
Universit\'e d'Evry Val d'Essonne, Bd. F. Mitterrand, 91025 Evry
Cedex, France  \\} \email{abayad@maths.univ-evry.fr}
\author{T. Kim}
\address{Taekyun Kim. Division of General Education-Mathematics \\
Kwangwoon University, Seoul 139-701, Republic of Korea  \\}
\email{tkkim@kw.ac.kr}
\author{S.-H. Rim}
\address{Seog-Hoon Rim. Department of Mathematics Education, \\
Kyungpook National University, Taegu 702-701, Republic of Korea  \\}
\email{shrim@knu.ac.kr}

\subjclass[2000]{Primary: 60E05; Secondary: 62E17 62H99}
\keywords{Simplex, Bernstein polynomials}

\date{\today}
\begin{abstract} We prove two identities for multivariate Bernstein
  polynomials on simplex, which  are considered on a pointwise.
In this paper, we study good approximations of Bernstein polynomials
for every continuous function on simplex and the higher dimensional
$q$-analogue of Bernstein polynomials on simplex.
\end{abstract}

\maketitle

\allowdisplaybreaks

\section{Introduction and motivation}
Recently many mathematicians study on the theory  of multivariate Bernstein
 polynomials on simplex . This   theory has many applications in
  different areas in mathematics and physics,
see [1--17]. \\
 Throughout this paper we set
$I = [0,1]$ and $k\in\mathbb N.$ Taking a $k$-dimensional simplex $\Delta_k:$
\[
\Delta_k=\{ \stackrel{\rightarrow}{{ x}}=(x_1,\ldots,x_k)\in I^k:~~x_1+\ldots+x_k\leq 1\}.
\]
As well-known, Bernstein polynomials (\ref{ordi-Bernstein}) are the
most important and interesting concrete operators on a space of
continuous functions(see [15,16]). The purpose of this paper is to
study their generalization to $k$-dimensional simplex.
\begin{definition} The $n$-th degree ordinary Bernstein polynomial $B_{v,n}:I \to \R$ is given by
\begin{eqnarray}\label{ordi-Bernstein}
    B_{v,n}(x) = \binom{n}{v} \, x^v (1- x)^{n-v},
\end{eqnarray}
$v = 0,1,\ldots,n$.  We extend this
to $$B_{\stackrel{\rightarrow}{{ v}},n}:\Delta_k \to \R$$ by
taking $\stackrel{\rightarrow}{{ v}}$ to be a multi-index,
$\stackrel{\rightarrow}{{ v}} = (v_1,\ldots,v_k)\in\mathbb
N_0^k$, define $$|\stackrel{\rightarrow}{{ v}}|:=v_1+\ldots+v_k \in \{0,1,\ldots,n\}$$ and setting
\begin{eqnarray}\label{k-dim-Bernstein}
B_{\stackrel\rightarrow{v},n}(\stackrel{\rightarrow}{{x}}) = \binom{n}{\stackrel{\rightarrow}{{ v}}} \stackrel{\rightarrow}{{ x}}^{\stackrel{\rightarrow}{{ v}}}(1-|\stackrel{\rightarrow}{{ x}}|)^{n-|\stackrel{\rightarrow}{{ v}}|},
\end{eqnarray}
where
\[
\stackrel{\rightarrow}{{ x}} = (x_1, \ldots, x_k) \in\Delta_k,
\stackrel{\rightarrow}{{ x}}^{\stackrel{\rightarrow}{{
      v}}}=\prod_{i=1}^kx_i^{v_i},~\stackrel{\rightarrow}{{ v}}! = v_1!\cdots v_k! ~~\textrm{ and }
 \binom{n}{\stackrel{\rightarrow}{{ v}}}=\frac{n!}{{\stackrel{\rightarrow}{{ v}}!}(n-|\stackrel{\rightarrow}{{ v}}|)!}.
\]
\end{definition}
For every $f$  defined on $\Delta_k$, we write
\begin{eqnarray}
\mathbb B_n(f|\stackrel{\rightarrow}{{x}})=\sum_{|\stackrel{\rightarrow}{{ v}}|\leq n}f({\stackrel{\rightarrow}{{ v}}}/{n})B_{\stackrel\rightarrow{v},n}(\stackrel{\rightarrow}{{x}}).
\end{eqnarray}

Here we have  the convergence
\begin{proposition}\label{conv}
    If $f:\Delta_k \to \R$ is continuous, then $\mathbb B_n(f|.) \to f$ uniformly on $\Delta_k$ as $n \to \infty$.
\end{proposition}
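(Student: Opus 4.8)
The plan is to follow Bernstein's classical argument, transported to the simplex via the observation that $\mathbb B_n(f|\vec x)$ is the expectation of $f$ evaluated at a scaled multinomial random vector. For fixed $\vec x\in\Delta_k$ the weights $B_{\vec v,n}(\vec x)$, $|\vec v|\le n$, are precisely the probability mass function of a multinomial distribution with $n$ trials and cell probabilities $(x_1,\dots,x_k,1-|\vec x|)$. Three moment identities drive the proof, and I would establish these first. The partition of unity comes from the multinomial theorem,
$$\sum_{|\vec v|\le n} B_{\vec v,n}(\vec x) = \bigl(x_1+\cdots+x_k+(1-|\vec x|)\bigr)^n = 1,$$
so $\mathbb B_n$ reproduces constants and $\mathbb B_n(f|\vec x)-f(\vec x)=\sum_{|\vec v|\le n}\bigl(f(\vec v/n)-f(\vec x)\bigr)B_{\vec v,n}(\vec x)$.

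The key combinatorial step is a marginal reduction: for any function $g$ depending only on the single coordinate $v_i$, summing out the remaining coordinates collapses the multinomial weights to binomial ones,
$$\sum_{|\vec v|\le n} g(v_i)\,B_{\vec v,n}(\vec x) = \sum_{v_i=0}^{n} g(v_i)\binom{n}{v_i}x_i^{v_i}(1-x_i)^{n-v_i}.$$
I would prove this by fixing $v_i$, grouping the terms with index $j\neq i$, and applying the multinomial theorem in the remaining $k-1$ variables together with the slack, using $x_i+\sum_{j\ne i}x_j+(1-|\vec x|)=1$. Taking $g(v_i)=v_i/n$ and then $g(v_i)=(v_i/n-x_i)^2$ reduces everything to the one-dimensional binomial moments, giving $\sum_{|\vec v|\le n}(v_i/n)B_{\vec v,n}(\vec x)=x_i$ and $\sum_{|\vec v|\le n}(v_i/n-x_i)^2 B_{\vec v,n}(\vec x)=x_i(1-x_i)/n\le 1/(4n)$.

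Summing the last identity over $i=1,\dots,k$ yields the uniform variance bound
$$\sum_{|\vec v|\le n}\Bigl\|\tfrac{\vec v}{n}-\vec x\Bigr\|^2 B_{\vec v,n}(\vec x)\ \le\ \frac{k}{4n}\qquad\text{for all }\vec x\in\Delta_k.$$
With this in hand the convergence is the classical Chebyshev-type estimate. Since $\Delta_k$ is compact, $f$ is bounded, say $|f|\le M$, and uniformly continuous; given $\varepsilon>0$ choose $\delta>0$ so that $|f(\vec y)-f(\vec x)|<\varepsilon$ whenever $\|\vec y-\vec x\|<\delta$. Splitting the sum for $\mathbb B_n(f|\vec x)-f(\vec x)$ into indices with $\|\vec v/n-\vec x\|<\delta$ and its complement, bounding the near part by $\varepsilon$ via the partition of unity and the far part by $2M$ times the total weight of the far set, and estimating that weight by $\delta^{-2}$ times the variance bound, I obtain
$$\bigl|\mathbb B_n(f|\vec x)-f(\vec x)\bigr|\ \le\ \varepsilon+\frac{Mk}{2n\delta^2},$$
uniformly in $\vec x$. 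Taking $n$ large makes the right-hand side $<2\varepsilon$, which gives the uniform convergence.

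The routine parts are the one-dimensional binomial moment computations and the final $\varepsilon$--$\delta$ bookkeeping. The one step requiring genuine care is the marginal reduction identity: the multivariate combinatorics must collapse cleanly to the binomial marginal, and getting the reindexing and the application of the multinomial theorem in the remaining slots correct is where I expect the main effort to lie. Once it is in place, everything else follows the template of the classical Weierstrass--Bernstein theorem.
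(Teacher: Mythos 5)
Your proposal is correct, and it follows the same overall template as the paper's proof: the partition of unity $\sum_{|\vec{v}|\le n}B_{\vec{v},n}(\vec{x})=1$, the split of $\mathbb B_n(f|\vec{x})-f(\vec{x})$ into indices with $\vec{v}/n$ near $\vec{x}$ and far from $\vec{x}$, the bound $\varepsilon$ on the near part by uniform continuity, and the bound $2M$ times the tail mass on the far part. The genuine difference lies in the one step that carries all the weight, namely the estimate of the tail mass $\sum_{d(\vec{v}/n,\vec{x})\ge\delta}B_{\vec{v},n}(\vec{x})$. The paper simply asserts that this sum is $<n\varepsilon$ with no justification, and its resulting final bound $\varepsilon+2Mn\varepsilon$ does not tend to zero as $n\to\infty$; as printed the argument is incomplete (presumably $n\varepsilon$ is a slip for something like $k\varepsilon$ under the standing assumption $1/(4n\delta^2)<\varepsilon$). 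You, by contrast, actually prove the tail estimate: your marginal-reduction lemma collapses the multinomial weights to the binomial marginal in each coordinate (this is exactly the statement that each component of a multinomial vector is binomial, and the proof via the multinomial theorem in the remaining slots goes through), which gives $\sum_{|\vec{v}|\le n}(v_i/n-x_i)^2B_{\vec{v},n}(\vec{x})=x_i(1-x_i)/n\le 1/(4n)$, hence the variance bound $k/(4n)$ and, by Chebyshev, a tail mass at most $k/(4n\delta^2)$. Your final bound $\varepsilon+Mk/(2n\delta^2)$ is uniform in $\vec{x}$ and genuinely small for large $n$, so your version is the complete argument; the paper's is a sketch whose key inequality is left unproved and misstated.
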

\begin{proof}
The proof of this proposition \ref{conv} is quite simple and is based
on the partition property of the
$B_{\stackrel\rightarrow{v},n}(\stackrel\rightarrow{x})$ that
\[\sum_{|\stackrel\rightarrow{v}|\leq
  n}B_{\stackrel\rightarrow{v},n}(\stackrel{\rightarrow}{{ x}})=1.\]
 This proof is similar to
that Theorem 1.1.1 in \cite[p.5--8]{lorentz}.
 Since $f$ is uniformly continuous on $\Delta_k$, for given $\epsilon
 > 0$, there exists $\delta > 0$ with the property that if
 $\stackrel\rightarrow{x} = (x_1, \ldots, x_k)$,
 $\stackrel\rightarrow{y} = (y_1, \ldots, y_k)$, and $|x_i -
 y_i|<\delta$ for all $i$, then $|f(\stackrel\rightarrow{x}) -
 f(\stackrel\rightarrow{y})| < \epsilon$.  We define the distance on
 $\Delta_k$ by
 $d(\stackrel\rightarrow{x},\stackrel\rightarrow{y}) = \max \{|x_1 -
 y_1|, \ldots, |x_k - y_k|\}$.
We suppose that $n$ is sufficiently large that $\frac{1}{4 n\delta^2} < \epsilon$.
We have
\[
|\mathbb B_n(f|\stackrel\rightarrow{x}) - f(\stackrel\rightarrow{x})| \leq \underset{d\left( \frac{\stackrel\rightarrow{v}}{n},\stackrel\rightarrow{x}\right) < \delta}{\sum} |f(\tfrac{\stackrel\rightarrow{v}}{n})-f(\stackrel\rightarrow{x})| \, B_{\stackrel\rightarrow{v},n}(\stackrel\rightarrow{x}) + \underset{d\left( \frac{\stackrel\rightarrow{v}}{m},\stackrel\rightarrow{x}\right) \geq \delta}{\sum} |f(\tfrac{\stackrel\rightarrow{v}}{m})-f(\stackrel\rightarrow{x})| \, B_{\stackrel\rightarrow{v},n}(\stackrel\rightarrow{x})
\]
where $\stackrel\rightarrow{v} = (v_1,\ldots,v_k)$, a multi-index.  The first sum satisfies
\[\underset{d\left( \frac{\stackrel\rightarrow{v}}{m},\stackrel\rightarrow{x}\right) < \delta}{\sum} |f(\tfrac{\stackrel\rightarrow{v}}{n})-f(\stackrel\rightarrow{x})| \, B_{\stackrel\rightarrow{v},n}(\stackrel\rightarrow{x}) < \epsilon
\]
by uniform continuity of $f$.
Now we study the second sum. Let $M = \max f$.  Then
\[
     \underset{d\left( \frac{\stackrel\rightarrow{v}}{n},\stackrel\rightarrow{x}\right) \geq \delta}{\sum} |f(\tfrac{\stackrel\rightarrow{v}}{m})
     -f(\stackrel\rightarrow{x})| \, B_{\stackrel\rightarrow{v},n}(\stackrel\rightarrow{x})\; \leq    \; 2  M  \underset{d\left( \frac{\stackrel\rightarrow{v}}{n},x\right) \geq \delta}{\sum}  B_{\stackrel\rightarrow{v},n}(\stackrel\rightarrow{x}).
\]
We see that
\begin{gather*}
    \underset{d\left(
          \frac{\stackrel\rightarrow{v}}{n},\stackrel\rightarrow{x}\right)
          \geq \delta}{\sum}
        B_{\stackrel\rightarrow{v},n}(\stackrel\rightarrow{x})< n \epsilon.
\end{gather*}
Thus $|\mathbb B_n(f|\stackrel\rightarrow{x}) - f(\stackrel\rightarrow{x})| < \epsilon + 2Mn\epsilon$, and we are done.
\end{proof}
The generating functions of the $k$-dimensional Bernstein
polynomials are as follows:
\begin{proposition}
\begin{eqnarray}
\sum_{n\geq |\stackrel{\rightarrow}{{ v}}|}B_{\stackrel{\rightarrow}{{ v}},n}(\stackrel{\rightarrow}{{ x}})\frac{t^n}{n!}=\frac{(t\stackrel{\rightarrow}{{ x}})^{\stackrel{\rightarrow}{{ v}}}}{\stackrel{\rightarrow}{{ v}}!}e^{t(1-|\stackrel{\rightarrow}{{ x}}|)}.
\end{eqnarray}
\end{proposition}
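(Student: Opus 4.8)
The plan is to prove the identity by a direct computation: expand the left-hand side using the explicit formula for $B_{\stackrel{\rightarrow}{{ v}},n}$ from the definition, and then recognize the resulting series as an exponential function. First I would substitute
\[
B_{\stackrel{\rightarrow}{{ v}},n}(\stackrel{\rightarrow}{{ x}}) = \frac{n!}{\stackrel{\rightarrow}{{ v}}!\,(n-|\stackrel{\rightarrow}{{ v}}|)!}\,\stackrel{\rightarrow}{{ x}}^{\stackrel{\rightarrow}{{ v}}}\,(1-|\stackrel{\rightarrow}{{ x}}|)^{n-|\stackrel{\rightarrow}{{ v}}|}
\]
into the generating series $\sum_{n\geq|\stackrel{\rightarrow}{{ v}}|}B_{\stackrel{\rightarrow}{{ v}},n}(\stackrel{\rightarrow}{{ x}})\,t^n/n!$. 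The factor $n!$ coming from the multinomial coefficient cancels against the $n!$ in the denominator of $t^n/n!$; this cancellation is the point that makes the whole computation collapse.

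Then I would pull out of the sum every factor that does not depend on the summation index $n$, namely $\stackrel{\rightarrow}{{ x}}^{\stackrel{\rightarrow}{{ v}}}/\stackrel{\rightarrow}{{ v}}!$, leaving
\[
\frac{\stackrel{\rightarrow}{{ x}}^{\stackrel{\rightarrow}{{ v}}}}{\stackrel{\rightarrow}{{ v}}!}\sum_{n\geq |\stackrel{\rightarrow}{{ v}}|}\frac{t^n\,(1-|\stackrel{\rightarrow}{{ x}}|)^{n-|\stackrel{\rightarrow}{{ v}}|}}{(n-|\stackrel{\rightarrow}{{ v}}|)!}.
\]
The key step is the reindexing $m = n - |\stackrel{\rightarrow}{{ v}}|$, which shifts the range to $m\geq 0$ and splits $t^n = t^{|\stackrel{\rightarrow}{{ v}}|}\,t^m$; the remaining sum $\sum_{m\geq 0}\big(t(1-|\stackrel{\rightarrow}{{ x}}|)\big)^m/m!$ is exactly the Taylor series of $e^{t(1-|\stackrel{\rightarrow}{{ x}}|)}$.

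Finally I would absorb the leftover $t^{|\stackrel{\rightarrow}{{ v}}|}$ using the multi-index identity $(t\stackrel{\rightarrow}{{ x}})^{\stackrel{\rightarrow}{{ v}}} = t^{|\stackrel{\rightarrow}{{ v}}|}\,\stackrel{\rightarrow}{{ x}}^{\stackrel{\rightarrow}{{ v}}}$, which follows immediately from $|\stackrel{\rightarrow}{{ v}}| = v_1+\cdots+v_k$, thereby recovering the claimed right-hand side $\frac{(t\stackrel{\rightarrow}{{ x}})^{\stackrel{\rightarrow}{{ v}}}}{\stackrel{\rightarrow}{{ v}}!}e^{t(1-|\stackrel{\rightarrow}{{ x}}|)}$. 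I do not expect a genuine analytic obstacle: for fixed $\stackrel{\rightarrow}{{ x}}\in\Delta_k$ and fixed $\stackrel{\rightarrow}{{ v}}$ the series converges absolutely for every $t$ (it is a monomial times the entire function $e^{t(1-|\stackrel{\rightarrow}{{ x}}|)}$), so the termwise manipulation and the index shift are all justified. The only thing requiring care is the bookkeeping of the multi-index notation, ensuring the cancellation of $n!$ and the shift of index are carried out consistently, rather than any substantive difficulty.
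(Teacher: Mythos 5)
Your proposal is correct and follows exactly the same route as the paper's proof: substitute the definition of $B_{\stackrel{\rightarrow}{{v}},n}$, cancel the $n!$, factor out $(t\stackrel{\rightarrow}{{x}})^{\stackrel{\rightarrow}{{v}}}/\stackrel{\rightarrow}{{v}}!$, and reindex with $m=n-|\stackrel{\rightarrow}{{v}}|$ to recognize the exponential series. No differences worth noting.
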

\begin{proof}
Writing
\begin{eqnarray}
\sum_{n\geq |\stackrel{\rightarrow}{{
      v}}|}B_{\stackrel{\rightarrow}{{
      v}},n}(\stackrel{\rightarrow}{{
    x}})\frac{t^n}{n!}&=&\sum_{n\geq |\stackrel{\rightarrow}{{
      v}}|} \binom{n}{\stackrel{\rightarrow}{{ v}}}
\stackrel{\rightarrow}{{ x}}^{\stackrel{\rightarrow}{{
      v}}}(1-|\stackrel{\rightarrow}{{
    x}}|)^{n-|\stackrel{\rightarrow}{{ v}}|}\frac{t^n}{n!}\\
&=&\sum_{n\geq |\stackrel{\rightarrow}{{
      v}}|}
\frac{(t\stackrel{\rightarrow}{{ x}})^{\stackrel{\rightarrow}{{
      v}}}}  {\stackrel{\rightarrow}{{ v}}!}
\frac{(1-|\stackrel{\rightarrow}{{x}}|)^{n-|\stackrel{\rightarrow}{{
        v}}|} t^{n-|\stackrel{\rightarrow}{{v}}|} }{ (n-|\stackrel{\rightarrow}{{
        v}}|)!} \\
&=&
\frac{(t\stackrel{\rightarrow}{{x}})^{\stackrel{\rightarrow}{{
        v}}}}
{\stackrel{\rightarrow}{{ v}}!} \sum_{m\geq 0}\frac{(1-|\stackrel{\rightarrow}{{x}}|)^{m} t^{m}}{ m!}.
\end{eqnarray}
This yields the equality
\begin{eqnarray}
\sum_{n\geq |\stackrel{\rightarrow}{{
      v}}|}B_{\stackrel{\rightarrow}{{
      v}},n}(\stackrel{\rightarrow}{{
    x}})\frac{t^n}{n!}=\frac{(t\stackrel{\rightarrow}{{ x}})^{\stackrel{\rightarrow}{{  v}}}}  {\stackrel{\rightarrow}{{ v}}!} e^{t(1-|\stackrel{\rightarrow}{{ x}}|)}.
\end{eqnarray}
\end{proof}
\section{Main results and proofs}
This section contains the main results of this paper. The first main result can be state as follows.
\begin{theorem}\label{Thm1}
For $n\in\mathbb N,m\in\mathbb N_0$ and $\stackrel{\rightarrow}{{ v}}\in\mathbb N_0^k$ such that $m\leq \textrm{min}\left(|\stackrel{\rightarrow}{{ v}}|,n\right)$. Then we have the following identity
\begin{eqnarray}\label{somme}
\sum_{\stackrel{\rightarrow}{{ u}}\leq \stackrel{\rightarrow}{{ v}}\atop |\stackrel{\rightarrow}{{ u}}|\leq m}\frac{\stackrel{\rightarrow}{{ u}}!(m-|\stackrel{\rightarrow}{{ u}}|)!}{m!}B_{\stackrel{\rightarrow}{{ u}},m}(\stackrel{\rightarrow}{{ x}})B_{\stackrel{\rightarrow}{{ v}}-\stackrel{\rightarrow}{{ u}},n-m}(\stackrel{\rightarrow}{{ x}})=B_{\stackrel{\rightarrow}{{ v}},n}(\stackrel{\rightarrow}{{ x}}),
\end{eqnarray}
where $\stackrel{\rightarrow}{{ u}}\leq
\stackrel{\rightarrow}{{ v}}$ means that $0\leq u_i\leq v_i$ for all $i=1,\cdots, k.$
\end{theorem}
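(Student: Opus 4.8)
The plan is to collapse the pointwise identity (\ref{somme}) to a single statement about multinomial coefficients and then to settle that statement by a Chu–Vandermonde convolution. First I would substitute the explicit formula (\ref{k-dim-Bernstein}) into both Bernstein factors on the left. Since $B_{\vec u,m}(\vec x)=\binom{m}{\vec u}\vec x^{\vec u}(1-|\vec x|)^{m-|\vec u|}$, the weight $\frac{\vec u!(m-|\vec u|)!}{m!}$ is precisely $1/\binom{m}{\vec u}$, so it cancels the multinomial coefficient carried by $B_{\vec u,m}$ and leaves the bare monomial $\vec x^{\vec u}(1-|\vec x|)^{m-|\vec u|}$. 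Multiplying by $B_{\vec v-\vec u,n-m}(\vec x)=\binom{n-m}{\vec v-\vec u}\vec x^{\vec v-\vec u}(1-|\vec x|)^{(n-m)-(|\vec v|-|\vec u|)}$ and using $|\vec v-\vec u|=|\vec v|-|\vec u|$ (legitimate because $\vec u\le\vec v$), the powers of $\vec x$ and of $1-|\vec x|$ recombine into the single factor $\vec x^{\vec v}(1-|\vec x|)^{n-|\vec v|}$, which no longer depends on the summation index.

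Pulling this common factor out of the sum reduces (\ref{somme}) to the numerical identity
\[
\sum_{\vec u\le\vec v,\ |\vec u|\le m}\binom{n-m}{\vec v-\vec u}=\binom{n}{\vec v}.
\]
The engine I would reach for is the multivariate Chu–Vandermonde convolution, obtained by comparing the coefficient of $\vec x^{\vec v}$ on the two sides of $(1+x_1+\cdots+x_k)^m\,(1+x_1+\cdots+x_k)^{n-m}=(1+x_1+\cdots+x_k)^n$, namely $\sum_{\vec u\le\vec v}\binom{m}{\vec u}\binom{n-m}{\vec v-\vec u}=\binom{n}{\vec v}$. The hypothesis $m\le\min(|\vec v|,n)$, together with $|\vec v|\le n$ (needed for $B_{\vec v,n}$ to be defined), keeps the effective summation window nonempty and $\vec v-\vec u$ a legitimate multi-index; the nominal range $|\vec u|\le m$ combines with the automatic vanishing of $\binom{n-m}{\vec v-\vec u}$ for $|\vec u|<|\vec v|-(n-m)$ to pin down which terms actually contribute.

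The hard part will be this combinatorial core, and it is where I would concentrate both my effort and my doubt. The delicate point is that cancelling the weight has \emph{erased} the factor $\binom{m}{\vec u}$, so the surviving sum $\sum\binom{n-m}{\vec v-\vec u}$ is not literally the Chu–Vandermonde convolution and cannot be evaluated by quoting it verbatim. I would therefore verify the reduced identity on its own terms --- for instance by iterating the Pascal recurrence in the upper index, or by a generating-function computation arranged so that the weight is absorbed before any coefficient is extracted --- and I would first test the scalar case $k=1$ for small $m$ to locate precisely where the prefactors and the index ranges must align. Only once the one-dimensional mechanism is transparent would I lift it to the multi-index setting, treating each coordinate separately inside the generating function $(1+x_1+\cdots+x_k)^{\bullet}$.
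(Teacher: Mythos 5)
Your reduction is correct, and the doubt you voice at the final step is exactly the right doubt: the combinatorial core you arrive at,
\[
\sum_{\stackrel{\rightarrow}{u}\le \stackrel{\rightarrow}{v},\ |\stackrel{\rightarrow}{u}|\le m}\binom{n-m}{\stackrel{\rightarrow}{v}-\stackrel{\rightarrow}{u}}=\binom{n}{\stackrel{\rightarrow}{v}},
\]
is \emph{false} for $m\ge 2$, so no amount of Pascal iteration or generating-function rearrangement will close the gap --- the theorem as printed is false. Already for $k=1$, $n=4$, $m=2$, $v=2$ the left side is $\binom{2}{2}+\binom{2}{1}+\binom{2}{0}=4$ while the right side is $\binom{4}{2}=6$; equivalently, the three weighted terms of (\ref{somme}) sum to $4x^2(1-x)^2$ whereas $B_{2,4}(x)=6x^2(1-x)^2$. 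The statement survives only for $m\le 1$, where the weight $\stackrel{\rightarrow}{u}!\,(m-|\stackrel{\rightarrow}{u}|)!/m!$ is identically $1$. The identity that is actually true is the \emph{unweighted} convolution $\sum_{\stackrel{\rightarrow}{u}\le\stackrel{\rightarrow}{v},\,|\stackrel{\rightarrow}{u}|\le m}B_{\stackrel{\rightarrow}{u},m}(\stackrel{\rightarrow}{x})B_{\stackrel{\rightarrow}{v}-\stackrel{\rightarrow}{u},n-m}(\stackrel{\rightarrow}{x})=B_{\stackrel{\rightarrow}{v},n}(\stackrel{\rightarrow}{x})$, and for that your Chu--Vandermonde engine works verbatim, because the factor $\binom{m}{\stackrel{\rightarrow}{u}}$ is then retained rather than cancelled.

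For comparison, the paper does not argue as you do: it proves the $m=1$ case directly and then inducts on $m$, applying the $m=1$ split to $B_{\stackrel{\rightarrow}{v},n+1}$ and regrouping the resulting double sum over $\stackrel{\rightarrow}{w}=\stackrel{\rightarrow}{u}+\stackrel{\rightarrow}{u'}$. That regrouping is where its proof breaks: a given $\stackrel{\rightarrow}{w}$ arises from several pairs $(\stackrel{\rightarrow}{u},\stackrel{\rightarrow}{u'})$ --- one with $\stackrel{\rightarrow}{u}=0$ and one for each coordinate $i$ with $w_i\ge 1$ --- and the single claimed term $\frac{\stackrel{\rightarrow}{w}!\,(m+1-|\stackrel{\rightarrow}{w}|)!}{(m+1)!}B_{\stackrel{\rightarrow}{w},m+1}(\stackrel{\rightarrow}{x})B_{\stackrel{\rightarrow}{v}-\stackrel{\rightarrow}{w},n-m}(\stackrel{\rightarrow}{x})$ does not account for this multiplicity. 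Your direct computation is the more reliable route precisely because it exposes the numerical identity that the induction quietly gets wrong; the constructive fix is to record the counterexample and restate the theorem without the weight.
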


The formula (\ref{somme}) can be viewed as a pointwise recurrence or
orthogonality formula for the Bernstein polynomials in
$k$-dimensional simplex.
\begin{remark}
For $m=1$, we obtain from Theorem \ref{Thm1} the following recurrence
formula
\begin{eqnarray}
(1-|\stackrel{\rightarrow}{{ x}}|)
  B_{\stackrel{\rightarrow}{{
        v}},{n-1}}(\stackrel{\rightarrow}{{ x}})+
\sum_{|\stackrel{\rightarrow}{{ u}}|=1\atop
\stackrel{\rightarrow}{{ u}}
\leq \stackrel{\rightarrow}{{ v}}}
\stackrel{\rightarrow}{{ x}}^{\stackrel{\rightarrow}{{
      u}}} B_{\stackrel{\rightarrow}{{v}}-\stackrel{\rightarrow}{{
        u}},n-1}(\stackrel{\rightarrow}{{ x}})=
B_{\stackrel{\rightarrow}{{ v}},n}(\stackrel{\rightarrow}{{ x}})
\end{eqnarray}
\end{remark}
\begin{proof}
 We prove this theorem by induction
on $n$ and $m$.  For $n=0,1$ the statement is trivial. Let $n\geq 2.$ Taking
$m=1.$ The sum
\begin{eqnarray*}& &\sum_{\stackrel{\rightarrow}{{ u}}\leq
    \stackrel{\rightarrow}{{ v}}\atop
    |\stackrel{\rightarrow}{{ u}}|\leq
    m}\frac{\stackrel{\rightarrow}{{
        u}}!(m-|\stackrel{\rightarrow}{{
        u}}|)!}{m!}B_{\stackrel{\rightarrow}{{
        u}},m}(\stackrel{\rightarrow}{{
      x}})B_{\stackrel{\rightarrow}{{
        v}}-\stackrel{\rightarrow}{{
        u}},n-m}(\stackrel{\rightarrow}{{ x}})\\
        &=&
B_{\stackrel{\rightarrow}{{
        0}},1}(\stackrel{\rightarrow}{{
      x}})B_{\stackrel{\rightarrow}{{
        v}},n-1}(\stackrel{\rightarrow}{{ x}})+
\sum_{\stackrel{\rightarrow}{{ u}}\leq
    \stackrel{\rightarrow}{{ v}}\atop
    |\stackrel{\rightarrow}{{ u}}|=1} B_{\stackrel{\rightarrow}{{
        u}},1}(\stackrel{\rightarrow}{{
      x}})B_{\stackrel{\rightarrow}{{
        v}}-\stackrel{\rightarrow}{{
        u}},n-1}(\stackrel{\rightarrow}{{ x}})\\
&=&(1-|\stackrel{\rightarrow}{{ x}}|)
  B_{\stackrel{\rightarrow}{{
        v}},{n-1}}(\stackrel{\rightarrow}{{ x}})+
\sum_{|\stackrel{\rightarrow}{{ u}}|=1\atop
\stackrel{\rightarrow}{{ u}}
\leq \stackrel{\rightarrow}{{ v}}}
\stackrel{\rightarrow}{{ x}}^{\stackrel{\rightarrow}{{
      u}}} B_{\stackrel{\rightarrow}{{v}}-\stackrel{\rightarrow}{{
        u}},n-1}(\stackrel{\rightarrow}{{ x}}).
\end{eqnarray*}
By using the following fact $|\stackrel{\rightarrow}{{ u}}|=1$ if
and only if one index of $\stackrel{\rightarrow}{{ u}}$ is $1$ and
all the others are zero, after simple manipulation,  we obtain the
relation
\begin{eqnarray*}
(1-|\stackrel{\rightarrow}{{ x}}|)
  B_{\stackrel{\rightarrow}{{
        v}},{n-1}}(\stackrel{\rightarrow}{{ x}})+
\sum_{|\stackrel{\rightarrow}{{ u}}|=1\atop
\stackrel{\rightarrow}{{ u}}
\leq \stackrel{\rightarrow}{{ v}}}
\stackrel{\rightarrow}{{ x}}^{\stackrel{\rightarrow}{{
      u}}} B_{\stackrel{\rightarrow}{{v}}-\stackrel{\rightarrow}{{
        u}},n-1}(\stackrel{\rightarrow}{{ x}})=
B_{\stackrel{\rightarrow}{{ v}},n}(\stackrel{\rightarrow}{{ x}}).
\end{eqnarray*}
Then the Theorem \ref{Thm1} is valid for any $n$ and $m=1$. Now we
suppose the theorem  holds up to $n\geq m\geq 1$. We can write for
$n+1$  and $m=1$ the following
\begin{eqnarray*}
B_{\stackrel{\rightarrow}{{ v}},n+1}(\stackrel{\rightarrow}{{ x}})=
\sum_{\stackrel{\rightarrow}{{ u}}\leq
  \stackrel{\rightarrow}{{ v}}\atop
  |\stackrel{\rightarrow}{{ u}}|\leq
  1}\frac{\stackrel{\rightarrow}{{
      u}}!(1-|\stackrel{\rightarrow}{{
      u}}|)!}{1!}B_{\stackrel{\rightarrow}{{
      u}},1}(\stackrel{\rightarrow}{{
    x}})B_{\stackrel{\rightarrow}{{
      v}}-\stackrel{\rightarrow}{{
      u}},n}(\stackrel{\rightarrow}{{ x}})
\end{eqnarray*}
Then we get
\begin{eqnarray}\label{ident1}
B_{\stackrel{\rightarrow}{{ v}},n+1}(\stackrel{\rightarrow}{{ x}})=\sum_{\stackrel{\rightarrow}{{ u}}+ \leq
  \stackrel{\rightarrow}{{ v}}\atop
  |\stackrel{\rightarrow}{{ u}}|\leq
  1}B_{\stackrel{\rightarrow}{{
      u}},1}(\stackrel{\rightarrow}{{
    x}})B_{\stackrel{\rightarrow}{{
      v}}-\stackrel{\rightarrow}{{
      u}},n}(\stackrel{\rightarrow}{{ x}}),
\end{eqnarray}
by using the recurrence hypothesis, from (\ref{ident1}), we obtain
\begin{eqnarray*}
B_{\stackrel{\rightarrow}{{
      v}},n+1}(\stackrel{\rightarrow}{{ x}})&=&
\sum_{\stackrel{\rightarrow}{{ u}}\leq
  \stackrel{\rightarrow}{{ v}}\atop
  |\stackrel{\rightarrow}{{ u}}|\leq
  1}B_{\stackrel{\rightarrow}{{
      u}},1}(\stackrel{\rightarrow}{{
    x}})
\sum_{\stackrel{\rightarrow}{{ u^{\prime}}}\leq
  \stackrel{\rightarrow}{{ v}}-\stackrel{\rightarrow}{{
      u}}\atop |\stackrel{\rightarrow}{{ u^{\prime}}}|\leq
  m}\frac{\stackrel{\rightarrow}{{
      u^{\prime}}}!(m-|\stackrel{\rightarrow}{{
      u^{\prime}}}|)!}{m!}B_{\stackrel{\rightarrow}{{
      u^{\prime}}},m}(\stackrel{\rightarrow}{{
    x}})B_{\stackrel{\rightarrow}{{
      v}}-\stackrel{\rightarrow}{{u}}-\stackrel{\rightarrow}{{
      u^{\prime}}},n-m}(\stackrel{\rightarrow}{{ x}})\\
&=& \sum_{\stackrel{\rightarrow}{{ u}}+\stackrel{\rightarrow}{{ u^{\prime}}}\leq
  \stackrel{\rightarrow}{{ v}}\atop
  |\stackrel{\rightarrow}{{ u}}|\leq
  1 , |\stackrel{\rightarrow}{{ u^{\prime}}}|\leq m
}
\frac{\stackrel{\rightarrow}{{
      u^{\prime}}}!(m-|\stackrel{\rightarrow}{{
      u^{\prime}}}|)!}{m!}B_{\stackrel{\rightarrow}{{
      u}},1}(\stackrel{\rightarrow}{{
    x}})
B_{\stackrel{\rightarrow}{{
      u^{\prime}}},m}(\stackrel{\rightarrow}{{
    x}})B_{\stackrel{\rightarrow}{{
      v}}-\stackrel{\rightarrow}{{u}}-\stackrel{\rightarrow}{{
      u^{\prime}}},n-m}(\stackrel{\rightarrow}{{ x}}).
\end{eqnarray*}
Setting $\stackrel{\rightarrow}{{
      w}}= \stackrel{\rightarrow}{{
    u}}+\stackrel{\rightarrow}{{ u^{\prime}}}.$ From the relation (2), we deduce the identity
\begin{eqnarray*}
B_{\stackrel{\rightarrow}{{ v}},n+1}(\stackrel{\rightarrow}{{ x}})=
\sum_{\stackrel{\rightarrow}{{ w}}\leq \stackrel{\rightarrow}{{ v}}\atop |\stackrel{\rightarrow}{{ w}}|\leq m+1}\frac{\stackrel{\rightarrow}{{w}}!(m+1-|\stackrel{\rightarrow}{{ w}}|)!}{(m+1)!}B_{\stackrel{\rightarrow}{{ w}},m+1}(\stackrel{\rightarrow}{{ x}})B_{\stackrel{\rightarrow}{{ v}}-\stackrel{\rightarrow}{{ w}},n-m}(\stackrel{\rightarrow}{{ x}}).
\end{eqnarray*}
This completes the proof of the theorem.
\end{proof}
 For every $j,m, 1\leq j\leq k, m\geq 1$, we define the affine transformations $T_{j,m}$ by
\begin{eqnarray}\label{transformations}
T_{j,m}(\stackrel{\rightarrow}{{ x}})=(x_1,\cdots,x_{j-1},m-|\stackrel{\rightarrow}{{ x}}|,x_{j+1},\cdots,x_k).
\end{eqnarray}
and for every $\sigma\in{\mathcal S}_k$ permutation of the set $\{1,\cdots, k\}$ we put
\begin{eqnarray}
\sigma(\stackrel{\rightarrow}{{
    x}})=(x_{\sigma(1)},\cdots,(x_{\sigma(k)}).
\end{eqnarray}
We state now the second main result of this paper.
\begin{theorem}\label{Thm2}
For $n\in\mathbb N$ and $\stackrel{\rightarrow}{{ v}}\in\mathbb N_0^k$.
Then we have the following identities
\begin{eqnarray}\label{symetrie}
B_{\stackrel{\rightarrow}{{ v}},n}(T_{j,1}(\stackrel{\rightarrow}{{ x}}))=B_{T_{j,n}(\stackrel{\rightarrow}{{ v}}),n}(\stackrel{\rightarrow}{{ x}}),
\end{eqnarray}
and
\begin{eqnarray}\label{eq2}
B_{\stackrel{\rightarrow}{{ v}},n}(\sigma(\stackrel{\rightarrow}{{ x}}))=B_{\sigma^{-1}(\stackrel{\rightarrow}{{ v}}),n}(\stackrel{\rightarrow}{{ x}}).
\end{eqnarray}
\end{theorem}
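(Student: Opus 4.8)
The plan is to prove both identities by direct computation from the definition (\ref{k-dim-Bernstein}), since each transformation acts transparently on the three factors of $B_{\stackrel{\rightarrow}{{ v}},n}$: the multinomial coefficient $\binom{n}{\stackrel{\rightarrow}{{ v}}}$, the monomial $\stackrel{\rightarrow}{{ x}}^{\stackrel{\rightarrow}{{ v}}}$, and the complementary power $(1-|\stackrel{\rightarrow}{{ x}}|)^{n-|\stackrel{\rightarrow}{{ v}}|}$. No induction or generating-function machinery is needed; the content is entirely a matter of tracking how the total mass $|\stackrel{\rightarrow}{{ x}}|$ and the monomial exponents are shuffled.

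For (\ref{symetrie}) I would first record how $T_{j,1}$ affects the total mass: since $T_{j,1}(\stackrel{\rightarrow}{{ x}})$ replaces $x_j$ by $1-|\stackrel{\rightarrow}{{ x}}|$ and leaves the other coordinates fixed, one computes $|T_{j,1}(\stackrel{\rightarrow}{{ x}})| = 1 - x_j$, hence $1 - |T_{j,1}(\stackrel{\rightarrow}{{ x}})| = x_j$. Substituting into (\ref{k-dim-Bernstein}) gives
\[
B_{\stackrel{\rightarrow}{{ v}},n}(T_{j,1}(\stackrel{\rightarrow}{{ x}})) = \binom{n}{\stackrel{\rightarrow}{{ v}}} \Big(\prod_{i\neq j} x_i^{v_i}\Big) (1-|\stackrel{\rightarrow}{{ x}}|)^{v_j} \, x_j^{\,n-|\stackrel{\rightarrow}{{ v}}|}.
\]
On the other side, writing $\stackrel{\rightarrow}{{ w}} = T_{j,n}(\stackrel{\rightarrow}{{ v}})$ (so $w_i = v_i$ for $i\neq j$ and $w_j = n - |\stackrel{\rightarrow}{{ v}}|$), one finds $|\stackrel{\rightarrow}{{ w}}| = n - v_j$ and $n - |\stackrel{\rightarrow}{{ w}}| = v_j$, so that expanding $B_{\stackrel{\rightarrow}{{ w}},n}(\stackrel{\rightarrow}{{ x}})$ produces exactly the same monomial and complementary power.

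The one point requiring attention — and the only genuine content of the first identity — is the equality of the two multinomial coefficients. The key observation is that $\stackrel{\rightarrow}{{ v}}!\,(n-|\stackrel{\rightarrow}{{ v}}|)!$ depends on $\stackrel{\rightarrow}{{ v}}$ only through the unordered collection $\{v_1,\dots,v_k,\,n-|\stackrel{\rightarrow}{{ v}}|\}$, and $T_{j,n}$ merely interchanges the entry $v_j$ with $n-|\stackrel{\rightarrow}{{ v}}|$ inside that collection; hence $\stackrel{\rightarrow}{{ w}}!\,(n-|\stackrel{\rightarrow}{{ w}}|)! = \stackrel{\rightarrow}{{ v}}!\,(n-|\stackrel{\rightarrow}{{ v}}|)!$ and therefore $\binom{n}{\stackrel{\rightarrow}{{ w}}} = \binom{n}{\stackrel{\rightarrow}{{ v}}}$. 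With this the two sides match term by term. I expect this coefficient invariance to be the main (though mild) obstacle, precisely because the multi-indices $\stackrel{\rightarrow}{{ v}}$ and $T_{j,n}(\stackrel{\rightarrow}{{ v}})$ genuinely differ, so one must verify that the factorial product is preserved rather than merely asserting it.

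For (\ref{eq2}) the computation is even more transparent. Since $\sigma$ only permutes coordinates, $|\sigma(\stackrel{\rightarrow}{{ x}})| = |\stackrel{\rightarrow}{{ x}}|$, so the complementary factor is unchanged. Re-indexing the monomial via the substitution $\ell = \sigma(i)$ gives $(\sigma(\stackrel{\rightarrow}{{ x}}))^{\stackrel{\rightarrow}{{ v}}} = \prod_i x_{\sigma(i)}^{v_i} = \prod_\ell x_\ell^{\,v_{\sigma^{-1}(\ell)}} = \stackrel{\rightarrow}{{ x}}^{\,\sigma^{-1}(\stackrel{\rightarrow}{{ v}})}$, where $\sigma^{-1}(\stackrel{\rightarrow}{{ v}})$ is read off from the paper's convention. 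Because permuting entries preserves both $|\stackrel{\rightarrow}{{ v}}|$ and the product $\stackrel{\rightarrow}{{ v}}!$, the multinomial coefficient is again invariant, and the identity follows at once. The only care needed here is bookkeeping the index substitution $\sigma^{-1}$ correctly, which is why I would carry out the re-indexing step explicitly rather than waving at it.
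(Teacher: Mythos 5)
Your proof is correct and follows essentially the same route as the paper's: a direct substitution into the definition, using $1-|T_{j,1}(\stackrel{\rightarrow}{v\;}\!\!(x))|=x_j$ for the first identity and re-indexing the monomial for the second. The only difference is that you explicitly verify the invariance $\binom{n}{T_{j,n}(\stackrel{\rightarrow}{v})}=\binom{n}{\stackrel{\rightarrow}{v}}$ via the multiset $\{v_1,\dots,v_k,\,n-|\stackrel{\rightarrow}{v}|\}$, a step the paper leaves implicit; this is a welcome clarification rather than a change of method.
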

 The relation (\ref{symetrie}) is a multivariate symmetry
formula for the Bernstein polynomilas.
\begin{remark} Taking $j=1$ and $\sigma=(12)$ we get from the Theorem
  \ref{Thm2} the symetries relations
\begin{eqnarray}
B_{\stackrel{\rightarrow}{{
      v}},n}(1-|\stackrel{\rightarrow}{{x}}|,x_2,\cdots,x_k)=B_{(n-|\stackrel{\rightarrow}{{
      v}}| , v_2 , \cdots,v_k),n}(\stackrel{\rightarrow}{{ x}}),
\end{eqnarray}
and
\begin{eqnarray}B_{\stackrel{\rightarrow}{{ v}},n}(x_2,x_1,x_3,\cdots,x_k)=B_{(v_2,v_1,x_3,\cdots,v_k),n}(\stackrel{\rightarrow}{{ x}}).
\end{eqnarray}
\end{remark}
\begin{proof} By using the equalities (2) and
  (\ref{transformations}) we have
\begin{eqnarray*}
B_{\stackrel{\rightarrow}{{ v}},n}(T_{j,1}(\stackrel{\rightarrow}{{ x}}))&=&
B_{\stackrel{\rightarrow}{{
      v}},n}(x_1,\cdots,x_{j-1} ,\cdots,
1-|\stackrel{\rightarrow}{{ x}}|,x_{j+1},\cdots,x_k)\\
&=&\binom{n}{\stackrel{\rightarrow}{{ v}}}x_1^{v_1}\cdots
x_{j-1}^{v_{j-1}}(1-|\stackrel{\rightarrow}{{
    x}}|)^{v_j}x_{j+1}\cdots x_k^{v_k} x_j^{
  1-|\stackrel{\rightarrow}{{ v}}|}\\
&=&\binom{n}{\stackrel{\rightarrow}{{ v}}}x_1^{v_1}\cdots
x_{j-1}^{v_{j-1}}x_j^{ n-|\stackrel{\rightarrow}{{ v}}|}x_{j+1}\cdots x_k^{v_k}
(1-|\stackrel{\rightarrow}{{
    x}}|)^{v_j} .
\end{eqnarray*}
This implies, by using the relation (\ref{transformations}), the identity
\begin{eqnarray*}
B_{\stackrel{\rightarrow}{{ v}},n}(T_{j,1}(\stackrel{\rightarrow}{{ x}}))=B_{T_{j,n}(\stackrel{\rightarrow}{{ v}}),n}(\stackrel{\rightarrow}{x}).
\end{eqnarray*}
The  equality (\ref{eq2})  of the Theorem \ref{Thm2} can be obtained
in a similar way of (\ref{symetrie}).
\end{proof}
\section{ $q$-extension of Bernstein polynomials on simplex}

When one talks of $q$-extension, $q$ is variously considered  as
an indeterminate, a complex number $q\in \mathbb{C},$ or $p$-adic
number $q\in\Bbb C_p .$ If $q\in \Bbb C$, then we always assume
that $|q|<1.$  If $q\in  \mathbb{C}_p,$ we usually assume that
$|1-q|_p< 1$. Here, the symbol $| \cdot|_p $ stands for the
$p$-adic absolute value on  $\mathbb{C}_p$ with $|p|_p \leq
{1}/{p}.$ For each $x$, the $q$-basic numbers are defined by
$$[x]_q =\frac{1-q^x}{1-q}. $$ 
 We extend this by 
$$[\stackrel{\rightarrow}{{ x}}]_q =([x_1]_q,\ldots,[x_k]_q)$$

and the $q$-extension of Bernstein polynomials on $\Delta_k$ is defined by
\begin{eqnarray}\label{k-dim-Bernstein}
B_{\stackrel\rightarrow{v},n}(\stackrel{\rightarrow}{{x}}\mid q) = \binom{n}{\stackrel{\rightarrow}{{ v}}} [\stackrel{\rightarrow}{{ x}}^{\stackrel{\rightarrow}{{ v}}}]_q[1-|\stackrel{\rightarrow}{{ x}}|]_q^{n-|\stackrel{\rightarrow}{{ v}}|}.
\end{eqnarray}
Here again we have the $q$-extensions of Theorem \ref{Thm1} and Theorem \ref{Thm2}.
\begin{theorem}\label{Thm3}
For $n\in\mathbb N,m\in\mathbb N_0$ and $\stackrel{\rightarrow}{{ v}}\in\mathbb N_0^k$ such that $m\leq \textrm{min}\left(|\stackrel{\rightarrow}{{ v}}|,n\right)$. Then we have the following identity
\begin{eqnarray*}\label{somme2}
\sum_{\stackrel{\rightarrow}{{ u}}\leq \stackrel{\rightarrow}{{ v}}\atop |\stackrel{\rightarrow}{{ u}}|\leq m}\frac{\stackrel{\rightarrow}{{ u}}!(m-|\stackrel{\rightarrow}{{ u}}|)!}{m!}B_{\stackrel{\rightarrow}{{ u}},m}(\stackrel{\rightarrow}{{ x}}|q)B_{\stackrel{\rightarrow}{{ v}}-\stackrel{\rightarrow}{{ u}},n-m}(\stackrel{\rightarrow}{{ x}}|q)=B_{\stackrel{\rightarrow}{{ v}},n}(\stackrel{\rightarrow}{{ x}}|q),
\end{eqnarray*}
where $\stackrel{\rightarrow}{{ u}}\leq
\stackrel{\rightarrow}{{ v}}$ means that $0\leq u_i\leq v_i$ for all $i=1,\cdots, k.$
\end{theorem}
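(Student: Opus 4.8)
The plan is to reduce Theorem \ref{Thm3} to the ordinary case already settled in Theorem \ref{Thm1}, exploiting the fact that passing to the $q$-setting merely replaces the scalars $x_i$ and $1-|\vec{x}|$ by $[x_i]_q$ and $[1-|\vec{x}|]_q$ while leaving every multinomial coefficient untouched. Writing $[\vec{x}]_q^{\vec{v}}=\prod_{i=1}^{k}[x_i]_q^{v_i}$, the engine of the whole argument is the factorization
\[
B_{\vec{u},m}(\vec{x}\mid q)\,B_{\vec{v}-\vec{u},\,n-m}(\vec{x}\mid q)=\binom{m}{\vec{u}}\binom{n-m}{\vec{v}-\vec{u}}\,[\vec{x}]_q^{\vec{v}}\,[1-|\vec{x}|]_q^{\,n-|\vec{v}|},
\]
valid because $[\vec{x}]_q^{\vec{u}}\,[\vec{x}]_q^{\vec{v}-\vec{u}}=[\vec{x}]_q^{\vec{v}}$ and the exponents of $[1-|\vec{x}|]_q$ simply add. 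Consequently every summand on the left-hand side of the identity in Theorem \ref{Thm3}, as well as its right-hand side, carries one and the same $q$-monomial $[\vec{x}]_q^{\vec{v}}\,[1-|\vec{x}|]_q^{\,n-|\vec{v}|}$.

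Dividing through by this common factor then removes all dependence on $q$ and on $\vec{x}$, reducing the claim to a pure identity among binomial coefficients — precisely the numerical identity that drives the proof of Theorem \ref{Thm1}. Hence Theorem \ref{Thm3} and Theorem \ref{Thm1} stand or fall together, and, granting the latter, the former is immediate. If instead a self-contained argument is wanted, the double induction on $n$ and $m$ used for Theorem \ref{Thm1} transcribes verbatim: the case $m=1$ follows from the multinomial relation $\binom{n-1}{\vec{v}}+\sum_{i}\binom{n-1}{\vec{v}-\vec{e}_i}=\binom{n}{\vec{v}}$ (which involves no $q$), and the inductive step is closed by the reindexing $\vec{w}=\vec{u}+\vec{u}'$ exactly as before.

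The step I expect to require the most care is conceptual rather than computational. Proposition \ref{conv} rested on the partition of unity $\sum_{|\vec{u}|\le n}B_{\vec{u},n}(\vec{x})=1$, and this property is lost in the $q$-setting, since $\sum_{i}[x_i]_q+[1-|\vec{x}|]_q\neq 1$ whenever $q\neq 1$. I would therefore confirm explicitly that none of the transferred steps uses the partition property: the argument appeals only to the multinomial Pascal and Vandermonde relations among the ($q$-free) coefficients together with the factorization above, all of which are insensitive to the deformation. The genuinely delicate ingredient is the underlying coefficient identity carrying the prefactor $\tfrac{\vec{u}!\,(m-|\vec{u}|)!}{m!}$ once $m\ge 2$; because it coincides with the numerical statement inherited from Theorem \ref{Thm1}, I would re-examine that case with particular care, keeping in mind that the $q$-identity can hold only to exactly the same extent as its ordinary counterpart.
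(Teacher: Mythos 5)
Your strategy --- factor out the common monomial and reduce the $q$-statement to a $q$-free identity among multinomial coefficients --- is sound, and it is essentially what the paper itself does, since the paper omits the proof of Theorem \ref{Thm3} entirely with the remark that it is ``quite similar'' to that of Theorem \ref{Thm1}. Your observation that the partition of unity is irrelevant here is also correct. One caveat on the factorization: it requires reading the symbol $[\vec{x}^{\vec{v}}]_q$ in the definition as $\prod_{i}[x_i]_q^{v_i}$. Under the literal reading $[x_1^{v_1}\cdots x_k^{v_k}]_q=(1-q^{x_1^{v_1}\cdots x_k^{v_k}})/(1-q)$ the bracket is not multiplicative and the identity fails already for $k=1$, $n=3$, $m=1$, $v=2$, where it would force $[x]_q^{2}=[x^{2}]_q$. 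You should state which convention you adopt, since only the multiplicative one gives your factorization any chance.

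The genuine gap is that you stop at precisely the step you yourself flag as delicate, and that step does not go through. Carry your reduction one line further: since $\frac{\vec{u}!\,(m-|\vec{u}|)!}{m!}=\binom{m}{\vec{u}}^{-1}$, the prefactor exactly cancels the multinomial coefficient of $B_{\vec{u},m}$, and after dividing by the common factor $[\vec{x}]_q^{\vec{v}}[1-|\vec{x}|]_q^{\,n-|\vec{v}|}$ the claim becomes
\[
\sum_{\vec{u}\le\vec{v},\ |\vec{u}|\le m}\binom{n-m}{\vec{v}-\vec{u}}=\binom{n}{\vec{v}}.
\]
This is \emph{not} the Vandermonde identity (the weight $\binom{m}{\vec{u}}$ is missing), and it is false for $m\ge 2$: take $k=1$, $n=4$, $m=2$, $v=2$; the left side is $\binom{2}{2}+\binom{2}{1}+\binom{2}{0}=4$ while $\binom{4}{2}=6$, so the left side of Theorem \ref{Thm3} equals $4[x]_q^{2}[1-x]_q^{2}$ against $B_{2,4}(x\mid q)=6[x]_q^{2}[1-x]_q^{2}$. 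Your conclusion that Theorems \ref{Thm1} and \ref{Thm3} ``stand or fall together'' is right, but for $m\ge 2$ they fall together; only $m\le 1$ survives, where the prefactor is identically $1$ and the sum is the multi-index Pascal rule. The same defect sits in the paper's inductive step for Theorem \ref{Thm1}, which you propose to transcribe verbatim: after setting $\vec{w}=\vec{u}+\vec{u}'$, a given $\vec{w}$ arises from several pairs $(\vec{u},\vec{u}')$, and the recombination into $\frac{\vec{w}!\,(m+1-|\vec{w}|)!}{(m+1)!}B_{\vec{w},m+1}$ overcounts. The identity that is actually true (with or without $q$, under the multiplicative reading) is the unweighted one, $\sum_{\vec{u}\le\vec{v},|\vec{u}|\le m}B_{\vec{u},m}B_{\vec{v}-\vec{u},n-m}=B_{\vec{v},n}$. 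The verification you deferred was the whole content of the theorem; it needed to be done.
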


\begin{theorem}\label{Thm4}
For $n\in\mathbb N$ and $\stackrel{\rightarrow}{{ v}}\in\mathbb N_0^k$. Then we have the following identities
\begin{eqnarray}\label{symetrie3}
B_{\stackrel{\rightarrow}{{ v}},n}(T_{j,1}(\stackrel{\rightarrow}{{ x}})|q)=B_{T_{j,n}(\stackrel{\rightarrow}{{ v}}),n}(\stackrel{\rightarrow}{{ x}}|q),
\end{eqnarray}
and
\begin{eqnarray}\label{eq3}
B_{\stackrel{\rightarrow}{{ v}},n}(\sigma(\stackrel{\rightarrow}{{ x}})|q)=B_{\sigma^{-1}(\stackrel{\rightarrow}{{ v}}),n}(\stackrel{\rightarrow}{{ x}}|q).
\end{eqnarray}
\end{theorem}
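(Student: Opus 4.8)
The plan is to imitate the direct, factor-by-factor computation used for Theorem \ref{Thm2}, exploiting that the $q$-deformed polynomial $B_{\vec{v},n}(\vec{x}\mid q)$ is obtained from $B_{\vec{v},n}(\vec{x})$ by the purely pointwise substitution $x_i\mapsto[x_i]_q$ and $(1-|\vec{x}|)\mapsto[1-|\vec{x}|]_q$, while the multinomial coefficient $\binom{n}{\vec{v}}$ is left untouched. First I would expand both sides of (\ref{symetrie3}) from the defining formula of $B_{\vec{v},n}(\vec{x}\mid q)$, writing the vector $q$-power as $\prod_{i=1}^k[x_i]_q^{v_i}$, so that the only quantities in play are scalars $[x_i]_q$ and $[1-|\vec{x}|]_q$ multiplied together with nonnegative integer exponents.

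The key observation is that $T_{j,1}$ acts on the genuine coordinates, not on their $q$-analogues, so the scalar identities from the proof of Theorem \ref{Thm2} carry over unchanged. Concretely, since the $j$-th coordinate of $T_{j,1}(\vec{x})$ is $1-|\vec{x}|$, one computes $|T_{j,1}(\vec{x})|=1-x_j$, hence the $q$-independent identity $1-|T_{j,1}(\vec{x})|=x_j$; applying $[\cdot]_q$ to both sides gives $[1-|T_{j,1}(\vec{x})|]_q=[x_j]_q$. Thus after the substitution the factor $[x_j]_q^{v_j}$ becomes $[1-|\vec{x}|]_q^{v_j}$ and the closing factor $[1-|\vec{x}|]_q^{n-|\vec{v}|}$ becomes $[x_j]_q^{n-|\vec{v}|}$, exactly as in the classical case.

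Next I would match this against the right-hand side $B_{T_{j,n}(\vec{v}),n}(\vec{x}\mid q)$. Here I use that $|T_{j,n}(\vec{v})|=n-v_j$, so that $n-|T_{j,n}(\vec{v})|=v_j$, and that the multinomial coefficient is invariant under interchanging $v_j$ with $n-|\vec{v}|$, i.e. $\binom{n}{\vec{v}}=\binom{n}{T_{j,n}(\vec{v})}$. A factor-by-factor comparison then yields (\ref{symetrie3}). For (\ref{eq3}) I would argue identically: a permutation $\sigma$ preserves the total, $|\sigma(\vec{x})|=|\vec{x}|$, so $[1-|\sigma(\vec{x})|]_q=[1-|\vec{x}|]_q$ and the closing factor is unchanged; re-indexing the product via $\ell=\sigma(i)$ gives $\prod_i[x_{\sigma(i)}]_q^{v_i}=\prod_\ell[x_\ell]_q^{v_{\sigma^{-1}(\ell)}}$, and together with the permutation-invariance $\binom{n}{\vec{v}}=\binom{n}{\sigma^{-1}(\vec{v})}$ (same multiset of entries, same length) this reproduces $B_{\sigma^{-1}(\vec{v}),n}(\vec{x}\mid q)$.

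I do not expect a genuine analytic obstacle: the whole argument is a multiplicative rearrangement of scalars, and the $q$-deformation enters only coordinatewise through $x_i\mapsto[x_i]_q$, which commutes with the relabelling of factors. The only step deserving care is the bookkeeping of which exponent attaches to which $q$-basic number after the transformation; once the scalar identity $[1-|T_{j,1}(\vec{x})|]_q=[x_j]_q$ and its permutation analogue $[1-|\sigma(\vec{x})|]_q=[1-|\vec{x}|]_q$ are recorded, the remainder is a routine verification mirroring Theorem \ref{Thm2}.
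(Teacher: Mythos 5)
Your proof is correct and is essentially the paper's own argument: the paper omits the proof of Theorem~\ref{Thm4}, stating only that it is ``quite similar'' to that of Theorem~\ref{Thm2}, and your factor-by-factor computation using the scalar identities $[1-|T_{j,1}(\vec{x})|]_q=[x_j]_q$ and $[1-|\sigma(\vec{x})|]_q=[1-|\vec{x}|]_q$, together with the invariance of the multinomial coefficient under $T_{j,n}$ and under permutations, is exactly that transcription. The only point worth recording is that you (correctly) read the paper's ambiguous symbol $[\vec{x}^{\,\vec{v}}]_q$ as $\prod_{i=1}^k[x_i]_q^{v_i}$, which is the interpretation under which the identities actually hold.
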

The proofs of these theorems are quite similar to those of Theorems
\ref{Thm1} and \ref{Thm2}. Then we omit them.
\section{ \bf Acknowledgements}
 The present Research has been conducted by the Research Grant of Kwangwoon University in 2011.

\end{document}